\newtheorem{standardfareylemma}{Lemma}
\newtheorem{newfareylemma}[standardfareylemma]{Lemma}
\newtheorem{upperbound}{Theorem}
\newtheorem{lowerbound}[upperbound]{Theorem}
\newtheorem{generalthm}[upperbound]{Theorem}
\newtheorem*{maxkconjecture}{Conjecture}
\newtheorem*{stupidlystrongconjecture}{Conjecture}
\newtheorem{notbothsmall}[standardfareylemma]{Lemma}
\newtheorem{summatorytotient}[standardfareylemma]{Lemma}
\newtheorem{dress}[standardfareylemma]{Lemma}
\newtheorem{sumofsmallreciprocals}[standardfareylemma]{Lemma}
\begin{document}

\vspace*{-2cm}

\Large
 \begin{center}
Improved bounds for the Mayer-Erd\H os phenomenon on similarly ordered Farey fractions \\ 

\hspace{10pt}

\large
Wouter van Doorn \\

\hspace{10pt}

\end{center}

\hspace{10pt}

\normalsize

\vspace{-25pt}

\centerline{\bf Abstract}
Let $\frac{a_1}{b_1}, \frac{a_2}{b_2}, \ldots$ be the Farey fractions of order $n$. We then prove that the inequality $(a_l - a_k)(b_l - b_k) \ge 0$ holds for all $k$ and $l > k$ with $l-k \le \left(\frac{1}{12} - o(1) \right)n$, sharpening an old result by Erd\H os. On the other hand, we will show that for all $n \ge 4$ there are $k, l$ with $k < l < k + \frac{n}{4} + 5$ for which the product $(a_l - a_k)(b_l - b_k)$ is negative.

\section{Introduction}
If two fractions $\frac{a}{b}$ and $\frac{a'}{b'}$ are such that the product $(a' - a)(b' - b)$ is non-negative, then we say that $\frac{a}{b}$ and $\frac{a'}{b'}$ are similarly ordered. For example, $\frac{2}{5}$ and $\frac{3}{7}$ are similarly ordered, while $\frac{2}{5}$ and $\frac{3}{4}$ are not. With this definition in mind, let $\frac{a_1}{b_1}, \frac{a_2}{b_2}, \ldots$ be the Farey sequence of order $n \ge 4$ and let $f(n)$ be the largest integer such that $\frac{a_k}{b_k}$ and $\frac{a_l}{b_l}$ are similarly ordered for all $k$ and $l$ with $|l - k| \le f(n)$. The condition $n \ge 4$ here ensures that the Farey sequence of order $n$ actually contains fractions (e.g. $\frac{1}{4}$ and $\frac{2}{3}$) which are not similarly ordered, so that $f(n)$ is unambiguously defined. \\

In \mbox{\cite{may1}} Mayer proved $f(n) \ge 3$ for all $n \ge 5$, which he subsequently improved in \mbox{\cite{may2}} to $f(n) \rightarrow \infty$ if $n \rightarrow \infty$. This was further improved by Erd\H os in \mbox{\cite{erd}}, where he showed $f(n) > cn$ for some suitable constant $c$. Moreover, his proof showed that one can take $c = \frac{1}{400}$. A generalization to arbitrary linear forms was then obtained by Zaharescu in \mbox{\cite{zah}} (with a constant $c = \frac{1}{480}$), after which Meng and Zaharescu generalized it even further in \mbox{\cite{meza}}, to arbitrary linear forms in multiple variables. \\

Concerning the original problem, Erd\H os remarked in \mbox{\cite{erd}} that he was not able to find the optimal value of $c$. And as far as we are aware, in the better part of a century since, no improvements have occurred in the literature. In this paper we take another look at Erd\H os's proof, try to optimize its arguments, and find a better lower bound. \\

We start off by looking at upper bounds, however. We will prove that $f(n) \le \frac{n}{4} + O(1)$ holds for all $n \ge 4$, and conjecture that this is optimal.

\section{Upper bounds}
Recall that $\frac{a_1}{b_1}, \frac{a_2}{b_2}, \ldots$ is the Farey sequence of order $n$, and, in order to upper bound $f(n)$, we aim to find $k$ and $l > k$ with $(a_l - a_k)(b_l - b_k) < 0$ and $l-k$ as small as possible. We claim that such $k$ and $l$ exist with $l-k < \frac{n}{4} + 5$. 

\begin{upperbound} \label{upper}
For all $n \ge 4$ we have $f(n) \le \left \lfloor \frac{n}{4} \right \rfloor + d$, with $d = 1, 2, 2, 4$, depending on whether $n \equiv 0, 1, 2, 3 \pmod{4}$.
\end{upperbound}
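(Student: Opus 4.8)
The plan is to prove the theorem constructively: for each residue of $n$ modulo $4$ I would write down two explicit Farey fractions that fail to be similarly ordered and sit close together. As a first step I record the sign analysis that governs the problem. If $k<l$ then $\frac{a_k}{b_k}<\frac{a_l}{b_l}$, and $(a_l-a_k)(b_l-b_k)<0$ can happen only with $a_l>a_k$ and $b_l<b_k$; the opposite sign pattern $a_l<a_k,\ b_l>b_k$ would force $\frac{a_l}{b_l}<\frac{a_k}{b_k}$, a contradiction. Hence it suffices to exhibit reduced fractions $\frac ab<\frac cd$ with $b\le n$, $a<c$ and $b>d$ such that the open interval $\left(\frac ab,\frac cd\right)$ contains at most $\left\lfloor \frac n4\right\rfloor+d$ Farey fractions of order $n$. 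Indeed, if $\frac ab=\frac{a_k}{b_k}$ and $\frac cd=\frac{a_l}{b_l}$, then this pair is not similarly ordered, so $f(n)\le l-k-1$, and $l-k-1$ is exactly the number of Farey fractions strictly between $\frac ab$ and $\frac cd$.

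The construction I would use places the pair symmetrically around the small-denominator rational $\tfrac12$; this is precisely what beats the naive count $\tfrac{3}{\pi^2}n$ coming from the global density. For $n\equiv0\pmod4$ I take $\frac ab=\frac{n/2-1}{n}$ (the fraction of denominator $n$ just below $\tfrac12$) and $\frac cd=\frac{n/2}{n-1}$ (the fraction of denominator $n-1$ just above $\tfrac12$), and one checks directly that $a<c$, $b>d$, $\frac ab<\frac cd$, and that both are reduced. For $n\equiv1\pmod4$ the analogous choice $\frac{(n-1)/2}{n},\ \frac{(n+1)/2}{n-1}$ works. The reason so few fractions lie between them is that Farey fractions are sparse near $\tfrac12$: a fraction $\frac pq$ just below $\tfrac12$ is $\frac{(q-1)/2}{q}=\frac12-\frac{1}{2q}$ with $q$ odd, while even denominators only give $\tfrac12$ itself or fractions at distance $\ge\frac1q$. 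Since our interval reaches distance only $\approx\frac1n$ from $\tfrac12$, essentially one parity class of denominators contributes.

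To make this precise I would sum over denominators $q\le n$, counting integers $p$ with $\frac ab<\frac pq<\frac cd$ and $\gcd(p,q)=1$. Writing $\frac pq=\frac12+\frac{s}{2q}$ with $s=2p-q$ (so $s\equiv q\pmod 2$), the distances of the endpoints from $\tfrac12$ confine $s$ to a bounded set (with $|s|\le 2$), so for each $q$ only the one or two values of $p$ nearest $q/2$ can occur. A short case analysis on the parity of $q$ and the sign of $s$ then shows that the admissible $q$ form essentially a single parity class inside a range of length $\approx\frac n2$ — concretely the odd $q$ with $q>\frac n2$ on the relevant side of $\tfrac12$ — whose cardinality is $\frac n4+O(1)$, with $\tfrac12$ and the boundary terms contributing only $O(1)$. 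Collecting these contributions gives the exact count $\left\lfloor\frac n4\right\rfloor+d$ in the two ``good'' residue classes, with $d=1$ and $d=2$ respectively.

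The main obstacle, and the origin of the four constants, is the exceptional behaviour when $n\equiv2,3\pmod4$, where the natural endpoints degenerate. For $n\equiv2\pmod4$ the fraction $\frac{n/2-1}{n}$ is not reduced, so instead I would straddle $\tfrac12$ with denominators $n-1$ and $n-2$, taking $\frac{n/2-1}{n-1}$ and $\frac{n/2}{n-2}$, which admits one further intermediate fraction and yields $d=2$; for $n\equiv3\pmod4$ even $\frac{(n+1)/2}{n-1}$ fails to be reduced, forcing a larger shift of the pair off of $\tfrac12$ and the correspondingly weaker constant $d=4$. The delicate part is the exact bookkeeping: verifying reducedness in each class, checking that no stray fraction enters at the ends of the range, and tracking the $O(1)$ term precisely enough to land on the stated values of $d$. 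Note that I do not need these to be the \emph{closest} non-similar pairs — only the upper bound on the number of Farey fractions between them — so the argument remains a pure existence construction.
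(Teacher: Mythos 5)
Your construction is essentially the paper's own. For $n\equiv 0,1,2\pmod 4$ the endpoint pairs you choose — $\frac{2m-1}{4m}$ and $\frac{2m}{4m-1}$ for $n=4m$, and $\frac{2m}{4m+1}$ and $\frac{2m+1}{4m}$ for $n=4m+1$ and $n=4m+2$ — are exactly the ones used in the paper. Your verification differs in a minor way: instead of listing the intermediate fractions explicitly as consecutive Farey fractions via the criterion $bc-ad=1$, $\max(b,d)\le n<b+d$ of Lemma \ref{stdd}, you count them by writing $\frac{p}{q}=\frac12+\frac{s}{2q}$ with $s=2p-q$ confined to a bounded set and summing over the admissible odd $q$. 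That is a perfectly valid alternative and yields the same counts; your preliminary sign analysis (a non-similarly-ordered pair with $k<l$ must have $a_l>a_k$ and $b_l<b_k$) is also correct.

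The one genuine gap is the case $n\equiv 3\pmod 4$, which you leave as a sketch, and where the remedy you hint at points in the wrong direction. You say that the failure of $\frac{(n+1)/2}{n-1}$ to be reduced forces ``a larger shift of the pair off of $\frac12$,'' but no new pair is needed: the paper keeps the same endpoints $\frac{2m}{4m+1}$ and $\frac{2m+1}{4m}$, which are still reduced fractions of denominator at most $n=4m+3$ and still not similarly ordered. What changes is only the count of intermediate fractions: the new denominator $q=4m+3$ contributes the two extra fractions $\frac{2m+1}{4m+3}$ and $\frac{2m+2}{4m+3}$ (the values $s=-1$ and $s=1$ with $q=4m+3$ in your parametrization), which is precisely where $d=4$ comes from. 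Running your own $s$-count for $n=4m+3$ with the unchanged endpoints gives $m+4$ intermediate fractions and hence $f(n)\le m+4$, so the case is entirely recoverable by your method — but as written the construction for this residue class is unspecified and the value $d=4$ is asserted rather than derived.
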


To prove this theorem, we will use the following well-known property of consecutive Farey fractions.

\begin{standardfareylemma} \label{stdd}
Let $\frac{a}{b}$ and $\frac{c}{d}$ be two reduced fractions with $0 \le \frac{a}{b} < \frac{c}{d} \le 1$. Then they are consecutive fractions in the Farey sequence of order $n$ if, and only if, $bc - ad = 1$ and $\max(b, d) \le n < b + d$.
\end{standardfareylemma}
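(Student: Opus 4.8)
The plan is to prove both directions by tracking the single integer $bc-ad$. Note first that $\frac{c}{d}-\frac{a}{b}=\frac{bc-ad}{bd}$, so $\frac{a}{b}<\frac{c}{d}$ already forces $bc-ad$ to be a strictly positive integer; this observation underlies everything. Write $F_n$ for the Farey sequence of order $n$, i.e.\ the reduced fractions in $[0,1]$ with denominator at most $n$. For the (easier) ``if'' direction I would assume $bc-ad=1$ and $\max(b,d)\le n<b+d$. The bound $\max(b,d)\le n$ together with the reducedness hypothesis places both $\frac{a}{b}$ and $\frac{c}{d}$ in $F_n$, so it only remains to exclude an intermediate fraction. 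Supposing $\frac{p}{q}\in F_n$ satisfies $\frac{a}{b}<\frac{p}{q}<\frac{c}{d}$, I would use the algebraic identity $d(bp-aq)+b(cq-dp)=q(bc-ad)=q$, in which both $bp-aq$ and $cq-dp$ are strictly positive integers (being, up to positive denominators, the two gaps). This forces $q\ge b+d>n$, contradicting $q\le n$; hence the two fractions are consecutive.

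For the ``only if'' direction, assume $\frac{a}{b}$ and $\frac{c}{d}$ are consecutive in $F_n$; then $\max(b,d)\le n$ is immediate, and I must produce $bc-ad=1$ and $n<b+d$. The key step is a Bézout construction: since $\gcd(a,b)=1$ there is an integer solution of $bx-ay=1$, and the general solution is obtained by shifting $(x,y)$ through integer multiples of $(a,b)$, so the value of $y$ runs over a residue class modulo $b$. I would single out the unique representative with $n-b<y\le n$ (here $b\le n$ guarantees $y\ge 1$). The resulting fraction $\frac{x}{y}$ is reduced, has denominator at most $n$, and sits just above $\frac{a}{b}$, since $\frac{x}{y}-\frac{a}{b}=\frac{1}{by}$.

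I would then show that $\frac{x}{y}$ is the immediate successor of $\frac{a}{b}$ in $F_n$. For any reduced $\frac{p}{q}$ with $\frac{a}{b}<\frac{p}{q}<\frac{x}{y}$, I would bound the two gaps below by $\frac{1}{bq}$ and $\frac{1}{yq}$ and add them, comparing the sum against the exact value $\frac{1}{by}$ of the full gap $\frac{x}{y}-\frac{a}{b}$; this telescoping inequality yields $q\ge b+y>n$. Thus no element of $F_n$ lies strictly between $\frac{a}{b}$ and $\frac{x}{y}$, which in particular gives $\frac{x}{y}\le\frac{c}{d}\le 1$, so $\frac{x}{y}\in F_n$; being larger than $\frac{a}{b}$, it must be at least the successor $\frac{c}{d}$, and hence equals it. Comparing reduced forms gives $x=c$ and $y=d$, so $bc-ad=bx-ay=1$, while $d=y>n-b$ delivers $n<b+d$.

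The main obstacle is entirely in the ``only if'' direction: correctly choosing the Bézout representative in the window $(n-b,\,n]$ and running the telescoping estimate that certifies $\frac{x}{y}$ is the immediate successor. A secondary point requiring care is confirming $\frac{x}{y}\le 1$ (so that it genuinely lies in $F_n$), which I would extract from the betweenness argument itself rather than assume. The ``if'' direction, by contrast, reduces to the one clean identity above and should present no difficulty.
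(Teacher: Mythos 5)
The paper itself states this lemma without proof, introducing it as a ``well-known property of consecutive Farey fractions,'' so there is no internal proof to compare against; your proposal must be judged on its own merits, and it holds up. The ``if'' direction is clean: the identity $d(bp-aq)+b(cq-dp)=q(bc-ad)=q$, with both $bp-aq\ge 1$ and $cq-dp\ge 1$ for an intermediate reduced fraction $\frac{p}{q}$, forces $q\ge b+d>n$, exactly as you say. The ``only if'' direction is also correct, and is essentially the classical argument (as in Hardy and Wright): the solutions $y$ of $bx-ay=1$ form a full residue class modulo $b$, so the window $(n-b,\,n]$, having length exactly $b$, contains a unique representative, and $n-b\ge 0$ guarantees $y\ge 1$; reducedness of $\frac{x}{y}$ is automatic from $bx-ay=1$; and your telescoping estimate $\frac{1}{by}=\frac{x}{y}-\frac{a}{b}\ge\frac{1}{bq}+\frac{1}{qy}=\frac{b+y}{bqy}$ correctly yields $q\ge b+y>n$ for any intermediate fraction. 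You were also right to flag, and then resolve, the subtle point that $\frac{x}{y}\le 1$ is not free: deducing $\frac{x}{y}\le\frac{c}{d}$ from the fact that $\frac{c}{d}\in F_n$ cannot lie strictly between $\frac{a}{b}$ and $\frac{x}{y}$ closes that gap, after which consecutiveness forces $\frac{x}{y}=\frac{c}{d}$, giving $bc-ad=1$ and $n<b+d$ from $d=y>n-b$. The proof is complete and could serve as a proof of Lemma \ref{stdd} verbatim.
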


\begin{proof}[Proof of Theorem \ref{upper}]
If $n = 4m$ for some $m \in \mathbb{N}$, consider the fraction $\frac{a_k}{b_k} = \frac{2m-1}{4m}$. One can then check by Lemma \ref{stdd} that the Farey sequence continues as follows:
\begin{equation*}
\frac{m}{2m+1}, \frac{m+1}{2m+3}, \ldots, \frac{2m-1}{4m-1}, \frac{1}{2}, \frac{2m}{4m-1}.
\end{equation*}

With $\frac{a_l}{b_l}$ equal to this final fraction, we notice that $\frac{a_k}{b_k} = \frac{2m-1}{4m}$ and $\frac{a_l}{b_l} = \frac{2m}{4m-1}$ are not similarly ordered. Since $l = k + m + 2$, this shows $f(n) \le m + 1$. \\ 

If $n = 4m+1$ or $n = 4m+2$, consider $\frac{a_k}{b_k} = \frac{2m}{4m+1}$ instead. These are then the next Farey fractions:
\begin{equation*}
\frac{1}{2}, \frac{2m+1}{4m+1}, \frac{2m}{4m-1}, \ldots, \frac{m+1}{2m+1}, \frac{2m+1}{4m}.
\end{equation*}

With $\frac{a_l}{b_l} = \frac{2m+1}{4m}$ we have $l = k + m + 3$ and $(a_l - a_k)(b_l - b_k) < 0$, so that $f(n) \le m + 2$. \\

Finally, for $n = 4m+3$ we also take $\frac{a_k}{b_k} = \frac{2m}{4m+1}$ and $\frac{a_l}{b_l} = \frac{2m+1}{4m}$. In this case however, the two fractions $\frac{2m+1}{4m+3}$ and $\frac{2m+2}{4m+3}$ are contained in the sequence we just mentioned as well (right before and right after $\frac{1}{2}$ respectively). We therefore have $l = k + m + 5$, implying $f(n) \le m + 4$.
\end{proof}

Based on computer calculations we tentatively believe Theorem \ref{upper} to be optimal for large enough $n$. 

\begin{maxkconjecture} \label{conj}
For all $n \ge 4$ we have $f(n) > \frac{n}{4}$. More precisely, for all $n \ge 92$ we have the equality $f(n) = \left \lfloor \frac{n}{4} \right \rfloor + d$, with $d$ as in Theorem \ref{upper}.
\end{maxkconjecture}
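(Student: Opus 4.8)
The plan is to prove the matching lower bound, since the upper bound $f(n)\le\lfloor n/4\rfloor+d$ is already Theorem \ref{upper}; it therefore suffices to show that every pair of Farey fractions at index distance at most $\lfloor n/4\rfloor+d$ is similarly ordered, equivalently that every \emph{ordered bad pair} $\frac ab=\frac{a_k}{b_k}<\frac cd=\frac{a_l}{b_l}$ satisfies $l-k>\lfloor n/4\rfloor+d$. A useful first simplification is that there is really only one type of bad pair to consider: if $\frac ab<\frac cd$ and $(c-a)(d-b)<0$, then the sign pattern must be $c>a$ and $d<b$, because the alternative $c<a$, $d>b$ would force $ad>bc$, i.e.\ $\frac ab>\frac cd$. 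So throughout I may assume $a<c$ and $b>d$, with the numerator strictly increasing and the denominator strictly decreasing across the interval.

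Next I would translate the problem into a purely \emph{local} counting statement: $l-k$ equals the number of reduced fractions $\frac pq$ with $q\le n$ and $\frac ab<\frac pq\le\frac cd$, so the two assertions become (i) this count always exceeds $n/4$, and (ii) near the extremal configurations it equals the value from Theorem \ref{upper}. The guiding observation, already visible in that proof, is that the extremal bad pairs all straddle the small-denominator fraction $\tfrac12$, and that the interval $(\frac ab,\frac cd)$ then has length $\Theta(1/n)$, inside which every Farey fraction other than $\tfrac12$ itself has denominator exceeding $n/2$. I would therefore aim for a structural lemma of the form: a bad pair with $a<c$, $b>d$ forces some rational $\frac rs$ of small denominator into $[\frac ab,\frac cd]$, and then count, denominator by denominator, the fractions the interval must contain, using Lemma \ref{stdd} to control the Farey neighbours of $\frac rs$ and to relate $b$, $d$ and $n$.

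Here lies the main obstacle, and the reason the statement is only a conjecture. Because the relevant interval has length comparable to $1/n$, its Farey count is \emph{not} governed by the global density $\tfrac{3}{\pi^2}n^2 L$: for the straddle-$\tfrac12$ interval that heuristic predicts about $\tfrac{9}{2\pi^2}n\approx 0.46\,n$ fractions, whereas the true index gap is only $\tfrac n4+O(1)$. Consequently the analytic machinery that yields the $\left(\tfrac{1}{12}-o(1)\right)n$ lower bound (summatory-totient estimates together with the Dress discrepancy bound) carries an error term of size $\gg n$ and is far too coarse to pin down the constant $\tfrac14$, let alone the exact value $\lfloor n/4\rfloor+d$. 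To close the gap one needs an \emph{exact} local count, which I would attempt via the continued-fraction expansions of the two endpoints: develop the Stern--Brocot path between $\frac ab$ and $\frac cd$, express $l-k$ as a sum over that path of the number of descendants with denominator $\le n$, and prove this sum is minimised precisely by the near-$\tfrac12$ families of Theorem \ref{upper}.

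Finally, the explicit threshold $n\ge 92$ in the exact equality strongly suggests that the optimal configuration only stabilises for large $n$, so I would expect the full argument to split into a rigidity statement valid for all large $n$ together with a finite computer verification for the remaining small cases. Proving the rigidity statement is the crux: one must rule out bad pairs straddling any small-denominator rational $\frac rs$ other than $\tfrac12$ (showing that $s=2$ always gives the smallest index gap), and it is exactly this comparison across different $s$, on the $1/n$ scale where the count is genuinely arithmetic rather than density-driven, that a purely asymptotic approach cannot resolve.
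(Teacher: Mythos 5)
This statement is a \emph{conjecture} in the paper, not a theorem: the author proves only the upper bound $f(n)\le\lfloor n/4\rfloor+d$ (Theorem \ref{upper}) and the much weaker lower bound $f(n)>\frac{n}{12}(1-o(1))$ (Theorem \ref{lower}), and reports a computer verification of the conjectured equality for $n\le 5000$. So there is no proof in the paper for you to match, and your proposal does not supply one either. Your text is a well-informed plan rather than an argument: the reduction to the sign pattern $a<c$, $b>d$, the identification of $l-k$ as a local Farey count, and the observation that the density heuristic $\frac{3}{\pi^2}n^2L$ overshoots the true gap near $\frac12$ (predicting roughly $0.46n$ versus the actual $\frac n4+O(1)$) are all correct and match the paper's own discussion of why the asymptotic machinery stalls at $\frac{1}{12}$. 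But every step that would actually close the gap is left as a wish: the ``structural lemma'' forcing a small-denominator rational into $[\frac ab,\frac cd]$ is not proved in the quantitative form you would need, the Stern--Brocot descendant-counting formula for $l-k$ is never written down, and the ``rigidity statement'' that the straddle-$\frac12$ configuration minimises the index gap over all small denominators $s$ is exactly the open problem, not a lemma you can defer.

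Concretely, the missing idea is an exact (not density-based) lower bound for the number of reduced fractions with denominator at most $n$ in an interval of length $\Theta(1/n)$ that is forced to exist by a bad pair, uniform over which small-denominator rational the interval straddles. Lemma \ref{dontworryaboutsmalldenominators} in the paper gives a bound of the shape $\frac{n+b+1}{2b}$ around a fraction of denominator $b$, which for $b=2$ yields only about $\frac n4$ \emph{one-sided} and degrades like $\frac{n}{2b}$ for larger $b$ --- far short of showing that every bad pair has index gap exceeding $\lfloor n/4\rfloor+d$. Without a replacement for that lemma that is both two-sided and tight across all $b$, your outline cannot be completed, and you should present this as a strategy for attacking the conjecture rather than as a proof.
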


We have checked this conjecture for all $n \le 5000$ and have not been able to find any counterexamples. In fact, the only positive integers $n$ with $4 \le n < 92$ for which $f(n)$ is strictly smaller than the upper bound from Theorem \ref{upper} are $n = 7, 9, 11, 15, 19, 23, 25, 27, 31, 35, 39, 49, 51, 63, 91$. \\

It is possible to strengthen the above conjecture in the following way: given any integer $d$, it seems plausible that for large enough $n$ one can actually classify all pairs of Farey fractions $(\frac{a_k}{b_k}$, $\frac{a_l}{b_l})$ with $l - k = \left \lfloor \frac{n}{4} \right \rfloor + d$ that are not similarly ordered. In particular, for every $d$ there should be an $e$ such that for all $n$ there are at most $e$ such pairs of fractions, with $e = 0$ for $d \le 0$ in particular. We leave the exact formulation (and proof) of such a stronger conjecture to the interested reader.

\section{Lower bounds}
To improve upon the lower bound $f(n) > \frac{n}{400}$ that was proven in \cite{erd}, we will first show that, given any fraction with small denominator, there is a small interval around it that only contains similarly ordered Farey fractions. To give an idea of what such an interval looks like, let us consider the fraction $\frac{4}{5}$. These are then the Farey fractions of order $40$ around this fraction:
\begin{equation*}
\frac{15}{19}, \frac{19}{24}, \frac{23}{29}, \frac{27}{34}, \frac{31}{39}, \frac{4}{5}, \frac{29}{36}, \frac{25}{31}, \frac{21}{26}, \frac{17}{21}.
\end{equation*}

One can notice that, to the left of $\frac{4}{5}$, both the numerators and the denominators form an increasing arithmetic progression (with common difference $4$ and $5$ respectively), whereas to the right of $\frac{4}{5}$ the numerators and denominators form decreasing arithmetic progressions. Such a result turns out to be true in general, which we will apply in the proof of our next lemma.

\begin{newfareylemma} \label{dontworryaboutsmalldenominators}
Let $\frac{a_k}{b_k}$, $\frac{a}{b}$ and $\frac{a_l}{b_l}$ be fractions in the Farey sequence of order $n$ with $\frac{a_k}{b_k} \le \frac{a}{b} \le \frac{a_l}{b_l}$. Then $\frac{a_k}{b_k}$ and $\frac{a_l}{b_l}$ are similarly ordered if $l - k \le \frac{n+b+1}{2b}$.
\end{newfareylemma}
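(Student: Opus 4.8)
The plan is to exploit the arithmetic-progression structure around $\frac{a}{b}$ that is described just before the statement, and to reduce the whole claim to a short divisibility argument. Let $\frac{p}{q}$ and $\frac{r}{s}$ be the left and right Farey neighbours of $\frac{a}{b}$ in the sequence of order $n$, so that $aq - bp = 1$ and $br - as = 1$ by Lemma \ref{stdd}. The fractions immediately to the left of $\frac{a}{b}$ are then $\frac{p-(i-1)a}{q-(i-1)b}$ for $i = 1, 2, \dots$, and those immediately to the right are $\frac{r-(j-1)a}{s-(j-1)b}$ for $j = 1, 2, \dots$. A one-line determinant computation shows that every fraction $\frac{x}{y}$ occurring on the left satisfies $ay - bx = 1$, while every fraction on the right satisfies $ay - bx = -1$; this is the only feature of the progressions I will actually use.

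Writing $\frac{a}{b} = \frac{a_m}{b_m}$ and setting $i = m - k \ge 0$ and $j = l - m \ge 0$, I have $l - k = i + j$, and $\frac{a_k}{b_k}$ is either $\frac{a}{b}$ (if $i = 0$) or the $i$-th fraction on the left, and similarly for $\frac{a_l}{b_l}$ on the right. The first real task is to show that the hypothesis $l - k \le \frac{n+b+1}{2b}$ forces both fractions to lie inside the progressions, i.e. that $i$ and $j$ are not so large that a fraction of smaller denominator has already interrupted the pattern. Using Lemma \ref{stdd}, two consecutive progression terms on the left stay Farey-consecutive exactly while their denominators sum to more than $n$, which translates into the $i$-th term being genuine as long as $i < \frac{2q - n + 3b}{2b}$. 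Since $j \ge 0$ gives $i \le l - k \le \frac{n+b+1}{2b}$, it suffices to check that $\frac{n+b+1}{2b} < \frac{2q-n+3b}{2b}$, i.e. that $2n + 1 < 2(q+b)$; and this holds because $\frac{p}{q}$ and $\frac{a}{b}$ being Farey-consecutive forces $q + b > n$, hence $q + b \ge n+1$. The symmetric computation with $s$ in place of $q$ handles the right-hand fraction.

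Once both fractions are known to lie in the progressions, the conclusion is almost immediate. From $ab_k - ba_k = \epsilon_k \in \{0,1\}$ and $ab_l - ba_l = \epsilon_l \in \{0,-1\}$ I get $b(a_l - a_k) = a(b_l - b_k) + \delta$ with $\delta = \epsilon_k - \epsilon_l \in \{0,1,2\}$, so that $b\,(a_l - a_k)(b_l - b_k) = (au+\delta)u$, where $u = b_l - b_k$. As $b > 0$ it is enough to show $(au+\delta)u \ge 0$. This is clear when $u \ge 0$, and when $u \le -1$ I argue by contradiction: a negative product would force $au + \delta > 0$, but $au + \delta = b(a_l - a_k)$ is then a positive multiple of $b$ and hence at least $b$, while $u \le -1$ and $\delta \le 2$ give $au + \delta \le 2 - a$; together these yield $a + b \le 2$. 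The only fractions with $a + b \le 2$ are $\frac{0}{1}$ and $\frac{1}{1}$, for which one of $\frac{a_k}{b_k}$, $\frac{a_l}{b_l}$ is an endpoint and similar orderedness is trivial, so otherwise the product is non-negative.

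I expect the main obstacle to be the bookkeeping of the second paragraph: making precise that the index gap $l - k$ controls how far out along the progressions $\frac{a_k}{b_k}$ and $\frac{a_l}{b_l}$ can sit, and verifying that the stated bound $\frac{n+b+1}{2b}$ is exactly strong enough (through $q + b \ge n+1$ and $s + b \ge n+1$) to keep both fractions inside the region where $ay - bx = \pm 1$. The final algebra is short; the genuine subtlety lies entirely in pinning down the progression window and in disposing of the degenerate endpoints $\frac{0}{1}$ and $\frac{1}{1}$.
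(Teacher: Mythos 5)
Your proof is correct, but it is organized quite differently from the paper's. The paper anchors the arithmetic progressions at the neighbours of $\frac{a}{b}$ in the Farey sequence of order $b$, writes down the whole segment at once with explicit endpoints $c, c', d, d'$, and then splits into three cases --- including one where $\frac{a_l}{b_l}$ is permitted to be the first fraction \emph{outside} the segment, which forces the auxiliary reduction to Mayer's theorem (the assumption $n \ge 5b-1$) and a separate computation showing $b_l > 2b$ and $a_l > a$ for that outside fraction. You instead anchor the progressions at the order-$n$ neighbours, show that the hypothesis $l - k \le \frac{n+b+1}{2b}$ already keeps both $\frac{a_k}{b_k}$ and $\frac{a_l}{b_l}$ strictly inside the progressions (via $q+b \ge n+1$ and $s+b \ge n+1$, which plays the role of the paper's closing floor-function computation), and then collapse the paper's three cases into the single identity $b(a_l - a_k) - a(b_l - b_k) = \delta \in \{0,1,2\}$; the paper's third case is exactly your $\delta = 2$, where it notes $bX - aY = 2$ forbids opposite signs when $a \ge 1$, $b \ge 2$. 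Your route trades a little slack for simplicity: bounding $i$ and $j$ each by $l-k$ rather than using $i+j = l-k$, and refusing to leave the progressions, gives a smaller window than the paper's $\min(d-c,d'-c')+2$, but $q+b \ge n+1$ shows it still covers $\frac{n+b+1}{2b}$, which is all the lemma claims. In exchange you avoid the $n \ge 5b-1$ reduction and the outside-the-segment analysis entirely, at the cost of having to dispose of $\frac{a}{b} \in \{\frac{0}{1}, \frac{1}{1}\}$ by hand at the end (the paper clears $b=1$ up front, which is slightly cleaner since your progression setup presupposes that both neighbours exist). If you write this up, make the divisibility step $au+\delta = b(a_l - a_k) \ge b$ explicit, as it is the crux of your sign argument.
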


\begin{proof}
If $b = 1$ the result is trivial as it forces either $\frac{a_k}{b_k} = \frac{0}{1}$ or $\frac{a_l}{b_l} = \frac{1}{1}$ in which case $\frac{a_k}{b_k}$ and $\frac{a_l}{b_l}$ are certainly similarly ordered, so without loss of generality we may assume $b \ge 2$. Moreover, if $\frac{n+b+1}{2b} < 3$, then the lemma follows from Mayer's result in \cite{may1}, so we may further assume $n \ge 5b-1$. Now, in the Farey sequence of order $b$, let $\frac{p}{q}$ and $\frac{r}{s}$ be the two fractions immediately to the left and right of $\frac{a}{b}$ respectively, and note that both $q$ and $s$ are smaller than $b$. Then, analogously to what we saw earlier in the case $\frac{a}{b} = \frac{4}{5}$, it follows from Lemma \ref{stdd} that the segment of the Farey sequence of order $n$ around $\frac{a}{b}$ is as follows:
\begin{equation*}
\frac{p + ca}{q + cb}, \frac{p + (c+1)a}{q + (c+1)b}, \ldots, \frac{p + da}{q + db}, \frac{a}{b}, \frac{r + d'a}{s + d'b}, \frac{r + (d'-1)a}{s + (d'-1)b}, \ldots, \frac{r + c'a}{s + c'b}.
\end{equation*}

Here, $c = \left \lfloor \frac{n - 2q - b}{2b} \right \rfloor + 1$, $c' = \left \lfloor \frac{n - 2s - b}{2b} \right \rfloor + 1$, $d = \left \lfloor \frac{n-q}{b} \right \rfloor$, and $d' = \left \lfloor \frac{n-s}{b} \right \rfloor$. The values of $c$ and $c'$ ensure that any sum of two consecutive denominators is larger than $n$ (which is required by Lemma \ref{stdd}), while $d$ and $d'$ are the largest values for which all denominators are smaller than or equal to $n$. \\

In order to prove Lemma \ref{dontworryaboutsmalldenominators}, we now have three different cases to consider: either $\frac{a_k}{b_k} = \frac{a}{b}$, or $\frac{a_l}{b_l} = \frac{a}{b}$, or $\frac{a_k}{b_k} < \frac{a}{b} < \frac{a_l}{b_l}$. As for the first case, it is clear that $\frac{a_k}{b_k} = \frac{a}{b}$ and $\frac{a_l}{b_l}$ are similarly ordered if $\frac{a_l}{b_l}$ is one of the elements in the segment, as both $a_l > a$ and $b_l > b$. Moreover, if $\frac{a_l}{b_l}$ is the smallest Farey fraction larger than $\frac{r + c'a}{s + c'b}$, then we claim $b_l > 2b$. Indeed, applying Lemma \ref{stdd} and $n \ge 5b-1$,
\begin{align*}
b_l &\ge n + 1 - (s + c'b) \\
&\ge n + 1 - \left(s + \frac{n - 2s - b}{2} + b\right) \\
&= \frac{n-b+2}{2} \\
&> 2b. 
\end{align*}

By the inequalities $s + c'b < (c' + 1)b \le 2c'b$ and the fact that $\frac{r + c'a}{s + c'b}$ and $\frac{a_l}{b_l}$ are consecutive Farey fractions, we (once again by Lemma \ref{stdd}) then get
\begin{align*}
a_l &= \frac{1 + b_l(r + c'a)}{s + c'b} \\
&> \frac{2bc'a}{2c'b} \\
&= a.
\end{align*}

Since both $a_l > a$ and $b_l > b$, we deduce that, even when $\frac{a_l}{b_l} > \frac{a}{b}$ is the smallest Farey fraction outside of the segment, $\frac{a}{b}$ and $\frac{a_l}{b_l}$ are still similarly ordered. We therefore conclude that $\frac{a_k}{b_k}$ and $\frac{a_l}{b_l}$ are similarly ordered in this case if $l - k \le d' - c' + 2$ holds, so in particular whenever $l - k \le \min(d-c, d'-c') + 2$. \\

Analogously, if $\frac{a_l}{b_l} = \frac{a}{b}$, then $\frac{a_k}{b_k}$ and $\frac{a_l}{b_l}$ are similarly ordered as well, as long as $l - k \le \min(d-c, d'-c') + 2$.  \\

As for the third and final case, assume that $\frac{a_k}{b_k} = \frac{p + ea}{q + eb}$ and $\frac{a_l}{b_l} = \frac{r + e'a}{s + e'b}$ are two fractions contained in the segment, with $\frac{a_k}{b_k} < \frac{a}{b} < \frac{a_l}{b_l}$, $c \le e \le d$ and $c' \le e' \le d'$. We then aim to prove that they are similarly ordered too. Define $X := a_l - a_k = r + e'a - p - ea$ and $Y := b_l - b_k = s + e'b - q - eb$. We then get
\begin{align*}
bX - aY &= (br - as) + (aq - bp) \\
&= 1 + 1.
\end{align*}

Here, the second equality follows from the fact that $\frac{p}{q}, \frac{a}{b}$ and $\frac{r}{s}$ were consecutive fractions in the Farey sequence of order $b$. Since $bX - aY = 2$ with $a \ge 1$ and $b \ge 2$, this implies that $X$ and $Y$ cannot have opposite signs, which is what we wanted to show. So in this third case we conclude that $\frac{a_k}{b_k}$ and $\frac{a_l}{b_l}$ are similarly ordered whenever $l - k \le \min(d-c, d'-c') + 2$ as well. \\

It therefore remains to calculate this latter quantity. By applying the aforementioned values of $c, c', d, d'$ we obtain
\begin{align*}
\min(d-c, d'-c') &= \min\left(\left \lfloor \frac{n-q}{b} \right \rfloor - \left \lfloor \frac{n - 2q - b}{2b} \right \rfloor, \left \lfloor \frac{n-s}{b} \right \rfloor - \left \lfloor \frac{n - 2s - b}{2b} \right \rfloor \right) - 1 \\
&\ge \min\left(\frac{n-q}{b} - \frac{n - 2q - b - 1}{2b}, \frac{n-s}{b} - \frac{n - 2s - b - 1}{2b} \right) - 2 \\
&= \frac{n+b+1}{2b} - 2.
\end{align*}

We conclude that if $l - k \le \frac{n+b+1}{2b}$, then $l - k \le \min(d-c, d'-c') + 2$, which in all three cases was sufficient to deduce that $\frac{a_k}{b_k}$ and $\frac{a_l}{b_l}$ are similarly ordered.
\end{proof}

Note that, in light of the proof of Theorem \ref{upper}, Lemma \ref{dontworryaboutsmalldenominators} is essentially optimal for $b = 2$. Now, before we continue with the statement and proof of our main lower bound, we need two more preliminary lemmas, where we define $N$ to be the number of Farey fractions of order $n$.

\begin{summatorytotient} \label{summtot}
For all positive integers $n$ we have $N > \frac{n^2}{4}$.
\end{summatorytotient}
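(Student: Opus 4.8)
The plan is to reduce the statement to a clean counting inequality about coprime pairs in a square, chosen so that no awkward lower-order terms ever appear. Since the Farey sequence of order $n$ consists of $\frac{0}{1}$, $\frac{1}{1}$, and, for each $2 \le b \le n$, the $\phi(b)$ reduced fractions with denominator $b$ lying strictly between $0$ and $1$, we have $N = 1 + \sum_{k=1}^{n} \phi(k)$ (using $\phi(1) = 1$). Writing $\Phi(n) := \sum_{k=1}^{n} \phi(k)$, it therefore suffices to prove $\Phi(n) > \frac{n^2}{4}$.

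First I would relate $\Phi(n)$ to coprime pairs in the \emph{full} square. Let $C := \#\{(a,b) \in \{1,\dots,n\}^2 : \gcd(a,b) = 1\}$. For each $b \ge 2$ there are exactly $\phi(b)$ values $a < b$ coprime to $b$; combining the symmetry $(a,b) \mapsto (b,a)$ with the fact that $(1,1)$ is the only coprime pair on the diagonal gives the identity $C = 2\Phi(n) - 1$. Hence it is enough to show $C > \frac{n^2}{2}$, i.e. that strictly more than half of all pairs in the square are coprime. Next I would bound the complementary count by a union bound over primes: a pair is non-coprime iff some prime $p$ divides both coordinates, and there are exactly $\lfloor n/p \rfloor^2$ pairs with $p \mid a$ and $p \mid b$, so summing over primes overcounts and yields
\[
n^2 - C \;\le\; \sum_{p \le n} \left\lfloor \frac{n}{p} \right\rfloor^2 \;\le\; n^2 \sum_{p} \frac{1}{p^2}.
\]
The crucial design choice here is to work with the full square $\{1,\dots,n\}^2$ rather than the triangle $a \le b$: in the square the count with $p \mid a,\, p \mid b$ is the clean product $\lfloor n/p \rfloor^2 \le n^2/p^2$, with no additive $\lfloor n/p \rfloor$ term, so the whole bound is purely quadratic in $n$ and there is no lower-order error to fight. (In the triangle one instead picks up terms $\binom{\lfloor n/p\rfloor + 1}{2}$, whose linear parts sum to an inconvenient $O(n \log\log n)$ contribution; the square detour is precisely what removes this.)

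The main remaining obstacle is then to establish the strict inequality $\sum_{p} \frac{1}{p^2} < \frac{1}{2}$, on which the entire argument hinges, and I would want an elementary self-contained proof of it rather than invoking $\zeta(2) = \pi^2/6$. To this end I would isolate the term $p = 2$ and bound the remaining primes by the odd integers exceeding $1$,
\[
\sum_{p} \frac{1}{p^2} = \frac{1}{4} + \sum_{p \ge 3} \frac{1}{p^2} \;\le\; \frac{1}{4} + \sum_{k \ge 1} \frac{1}{(2k+1)^2},
\]
and then use $\frac{1}{(2k+1)^2} < \frac{1}{2k(2k+2)} = \frac{1}{4}\left(\frac{1}{k} - \frac{1}{k+1}\right)$, so that the tail telescopes to something strictly below $\frac14$, giving $\sum_p p^{-2} < \frac12$. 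Chaining the three steps then yields $C > n^2 - n^2 \sum_p p^{-2} > \frac{n^2}{2}$, hence $\Phi(n) > \frac{n^2}{4}$ and $N = 1 + \Phi(n) > \frac{n^2}{4}$. The only delicate point is propagating strictness through the chain of inequalities, but this is automatic because the telescoping bound on $\sum_p p^{-2}$ is already strict, so no separate treatment of small $n$ is needed.
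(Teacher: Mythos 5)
Your proof is correct, and it takes a genuinely different route from the paper's. The paper computes $N = 1 + \sum_{i \le n}\varphi(i)$ via M\"obius inversion, invokes $\sum_{i \ge 1} \mu(i)/i^2 = 6/\pi^2$ and a harmonic-sum estimate to get $N > \frac{3n^2}{\pi^2} - \frac{n}{2}(\log n + 2)$, and then must dispose of the lower-order term by a computer check for all $n < 56$. You instead pass to the coprime-pair count $C = 2\Phi(n) - 1$ in the full square $\{1,\dots,n\}^2$, apply a union bound over primes to get $n^2 - C \le \sum_{p \le n} \lfloor n/p \rfloor^2 \le n^2 \sum_p p^{-2}$, and close with an elementary telescoping proof that $\sum_p p^{-2} < \frac{1}{2}$; all the identities you use ($N = 1 + \Phi(n)$, $C = 2\Phi(n)-1$, the counts $\lfloor n/p\rfloor^2$, the comparison of $\sum_{p\ge 3} p^{-2}$ with $\sum_{k\ge 1}(2k+1)^{-2}$, and the partial-fraction bound) check out. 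The trade-off is clear: the paper's method recovers the asymptotically sharp constant $3/\pi^2 \approx 0.304$ but pays with an $O(n\log n)$ error term and a finite verification, whereas your square trick caps out at the weaker constant $\frac{1}{2}\bigl(1 - \sum_p p^{-2}\bigr) \approx 0.274$ but is purely quadratic, fully self-contained (no $\zeta(2)$, no computation), and valid uniformly for every $n \ge 1$. Since only $\frac{1}{4}$ is needed, your argument is arguably the more satisfying proof of the stated lemma.
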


\begin{proof}[Proof (sketch).]
With a computer one can check the inequality for all $n < 56$, so assume $n \ge 56$. With $\varphi(n)$ Euler's totient function, we have $N = 1 + \sum_{i \le n} \varphi(i)$. By applying M\"obius inversion to the identity $n = \sum_{d | n} \varphi(d)$ and rewriting the sum $\sum_{i \le n} \varphi(i)$, we obtain $N = 1 + \frac{1}{2} \sum_{i \le n} \mu(i) \left \lfloor \frac{n}{i} \right \rfloor \left(\left \lfloor \frac{n}{i} \right \rfloor + 1 \right)$. Since $\left \lfloor \frac{n}{i} \right \rfloor \left(\left \lfloor \frac{n}{i} \right \rfloor + 1 \right) > \frac{n^2}{i^2} - \frac{n}{i}$, $\sum_{i \ge 1} \frac{\mu(i)}{i^2} = \frac{6}{\pi^2}$ and $\sum_{i \le n} \frac{1}{i} < \log(n) + 1$, with some algebra one can deduce $N > \frac{3n^2}{\pi^2} - \frac{n}{2} \left(\log(n) + 2 \right)$ for all $n \ge 1$. Since the latter is larger than $\frac{n^2}{4}$ for $n \ge 56$, this finishes the proof.
\end{proof}

We will furthermore make use of the following tight result that was obtained by Dress in \cite{dress}.

\begin{dress} \label{dress}
For $\alpha \in [0, 1]$, let $A_n(\alpha)$ be the number of Farey fractions of order $n$ in the interval $(0, \alpha)$. For all $\alpha \in [0, 1]$ and all $n \in \mathbb{N}$ we then have the bounds 

\begin{equation*}
N\left(\alpha - \frac{1}{n}\right) \le A_n(\alpha) \le N\left(\alpha + \frac{1}{n}\right).
\end{equation*}
\end{dress}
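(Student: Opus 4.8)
The plan is to first reduce the two inequalities to a single estimate evaluated only at rational $\alpha$, and then to attack that estimate through a Möbius/fractional-part expansion of $A_n(\alpha)$. Observe that $A_n(\alpha)$ is a nondecreasing step function of $\alpha$ that jumps by one exactly at the Farey fractions, while both $N(\alpha - 1/n)$ and $N(\alpha + 1/n)$ are increasing and affine in $\alpha$. Hence on each interval between consecutive Farey fractions the upper bound is tightest at the left endpoint and the lower bound at the right endpoint, so it suffices to verify both inequalities when $\alpha = A/B$ is itself a reduced Farey fraction (so $1 \le B \le n$), the general case following by monotonicity. Evaluating at $\alpha = A/B$ and using the appropriate one-sided values of $A_n$ there, both inequalities rearrange (with the help of $N/n \ge n/4 \ge 1$ from Lemma~\ref{summtot}) into the single symmetric estimate $|E(A/B)| \le N/n$, where $E(\alpha) := \alpha(N-1) - A_n(\alpha)$ is the signed discrepancy of the count; the tightness at $\alpha = 1/n$ (where $A_n = 0 = N(1/n - 1/n)$) shows no slack is available here.

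To get a handle on $E$, I would count Farey fractions denominator by denominator. For each $b \le n$ the number of reduced $a/b \in (0,\alpha)$ equals $\sum_{d \mid b}\mu(d)\lfloor b\alpha/d\rfloor$, and removing the floors via $\lfloor x\rfloor = x - \{x\}$, together with $\sum_{d\mid b}\mu(d)/d = \varphi(b)/b$ and $\sum_{b\le n}\varphi(b) = N-1$, extracts the main term per column and, after the substitution $b = dm$, yields the clean identity
\begin{equation*}
E(\alpha) = \sum_{d \le n}\mu(d)\,G_\alpha\!\left(\left\lfloor \tfrac{n}{d}\right\rfloor\right), \qquad G_\alpha(M) := \sum_{m=1}^{M}\{m\alpha\}.
\end{equation*}
At a rational $\alpha = A/B$ the sequence $\{mA/B\}$ is periodic of period $B$ with complete-period sum $(B-1)/2$, so $G_{A/B}(M) = \frac{B-1}{2B}M + \psi(M)$ with $\psi$ periodic of period $B$, hence bounded in absolute value by $O(B)$. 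Feeding this back and using the classical identity $\sum_{d\le n}\mu(d)\lfloor n/d\rfloor = 1$ makes the main term collapse to a harmless constant, leaving
\begin{equation*}
E(A/B) = \tfrac{B-1}{2B} + \sum_{d\le n}\mu(d)\,\psi\!\left(\left\lfloor \tfrac{n}{d}\right\rfloor\right).
\end{equation*}

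The crux, and the main obstacle, is to bound the remaining Möbius-weighted sum by $N/n + O(1)$. Since $\psi$ can be as large as a constant multiple of $B \le n$, estimating term by term only gives $O(n^2)$, so one must extract a saving of a full factor $n$ — and in fact pin down the constant exactly, since the bound is tight — purely from cancellation in $\mu$. I would attempt this by grouping the $d$ according to the $O(\sqrt{n})$ distinct values of $\lfloor n/d\rfloor$ and applying Abel summation against bounds for the partial sums of $\mu$; controlling the interaction of these Möbius partial sums with the periodic function $\psi$ so as to recover precisely the constant $1/n$ is the delicate technical heart, and is exactly the content supplied by Dress in \cite{dress}.
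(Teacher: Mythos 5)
The paper does not prove this lemma at all: it is imported verbatim from Dress \cite{dress}, so the only question is whether your argument is self-contained. It is not. Your setup --- reducing to rational $\alpha$, writing $A_n(\alpha)=\alpha(N-1)-\sum_{d\le n}\mu(d)\,G_\alpha(\lfloor n/d\rfloor)$ via M\"obius inversion, splitting $G_{A/B}$ into a linear main term plus a $B$-periodic remainder $\psi$, and collapsing the main term with $\sum_{d\le n}\mu(d)\lfloor n/d\rfloor=1$ --- is correct and is the natural first page of a proof. But the entire content of the lemma is the step you omit: showing that $\sum_{d\le n}\mu(d)\,\psi(\lfloor n/d\rfloor)$, whose individual terms can be of size comparable to $B\le n$, is bounded by $N/n$ with the exact constant. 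Grouping by the $O(\sqrt n)$ values of $\lfloor n/d\rfloor$ and Abel summation against partial sums of $\mu$ will not deliver this: unconditional bounds on $M(x)=\sum_{d\le x}\mu(d)$ are far too weak to save a full factor of $n$, and even strong cancellation in $M$ would not by itself pin down the sharp constant that makes the inequality tight. You concede this by deferring the ``delicate technical heart'' to \cite{dress} --- which turns the proposal into a reduction of the cited theorem to itself, not a proof.

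There is also a quantitative slip in the reduction. With $E(\alpha)=\alpha(N-1)-A_n(\alpha)$, the two inequalities of the lemma read $-N/n-\alpha\le E(\alpha)\le N/n-\alpha$, not $|E(\alpha)|\le N/n$: the lower bound $N(\alpha-1/n)\le A_n(\alpha)$ is equivalent to $E(\alpha)\le N/n-\alpha$, which is strictly stronger than $E(\alpha)\le N/n$ by the additive term $\alpha$. Since, as you yourself observe, equality already holds at $\alpha=1/n$ (where $E=(N-1)/n=N/n-\alpha$ exactly), there is no room to give away an additive constant, let alone nearly a full unit when $\alpha$ is close to $1$; the appeal to $N/n\ge n/4$ from Lemma \ref{summtot} does not repair this. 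So even granting the hard cancellation step, the symmetric target you reduce to would not imply the stated lower bound.
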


We are now ready to prove our main lower bound.

\begin{lowerbound} \label{lower}
If $\frac{a_k}{b_k}$ and $\frac{a_l}{b_l} > \frac{a_k}{b_k}$ are two fractions in the Farey sequence of order $n$ with $l - k \le \frac{n}{12} \left(1 - \frac{4}{n^{1/3}} \right)$, then $\frac{a_k}{b_k}$ and $\frac{a_l}{b_l}$ are similarly ordered.
\end{lowerbound}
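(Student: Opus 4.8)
The plan is to argue by contraposition: assuming $\frac{a_k}{b_k} < \frac{a_l}{b_l}$ are \emph{not} similarly ordered, I want to force $l-k$ to be large. Writing $t = l-k$, the first move is to feed Lemma \ref{dontworryaboutsmalldenominators} every fraction lying in the closed interval $[\frac{a_k}{b_k}, \frac{a_l}{b_l}]$. If some such $\frac ab$ had $b \le \frac{n+1}{2t-1}$, then $t \le \frac{n+b+1}{2b}$ and the lemma would make the pair similarly ordered, a contradiction. Hence every fraction between them (endpoints included) has denominator exceeding $\beta := \frac{n+1}{2t-1} \approx \frac{n}{2t}$, and the whole problem reduces to bounding how many Farey fractions must separate a non-similarly-ordered pair all of whose in-between denominators exceed $\beta$.

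Next I would extract what non-similar ordering says structurally. For $\frac{a_k}{b_k} < \frac{a_l}{b_l}$ one checks that $(a_l - a_k)(b_l - b_k) < 0$ can only occur in the form $a_l > a_k$ and $b_l < b_k$, since the reverse pattern $a_l<a_k,\ b_l>b_k$ would give $\frac{a_l}{b_l}<\frac{a_k}{b_k}$. Setting $D := a_l b_k - a_k b_l$, the value gap is $\frac{a_l}{b_l} - \frac{a_k}{b_k} = \frac{D}{b_k b_l}$, and from $a_l \ge a_k+1$, $b_l \le b_k - 1$ I obtain the crucial inequality
\begin{equation*}
D = a_l b_k - a_k b_l \ge (a_k+1)b_k - a_k(b_k-1) = a_k + b_k,
\end{equation*}
and symmetrically $D \ge a_l + b_l$. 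Thus a non-similar pair is forced to have determinant, and hence value gap, bounded below — something a similarly ordered near-neighbour need not satisfy.

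The heart of the argument is then to convert ``$t+1$ fractions, all of denominator $>\beta$, spanning an interval of length $\frac{D}{b_k b_l}$'' into a lower bound on $t$. Here I would parametrise the fractions of the enclosing order-$\lfloor\beta\rfloor$ Farey gap by the unimodular Stern--Brocot coordinates used implicitly in Lemma \ref{dontworryaboutsmalldenominators}, under which the in-between fractions become the primitive lattice points of a triangle $\{s'u + q'v \le n,\ u,v\ge 1\}$ ordered by slope, and count them. The count equals (coprime density)\,$\times$\,(area) up to boundary corrections; the area evaluates to $\frac{n^2 D}{2 b_k b_l}$, the coprime density $\frac{6}{\pi^2}$ is precisely what Lemma \ref{summtot} ($N > \tfrac{n^2}{4}$) underlies, and Dress's Lemma \ref{dress} controls the discrepancy between the true fraction-count and $N\cdot(\text{length})$. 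Combining these with $D \ge a_k + b_k$ and optimising the denominator threshold should yield $t \ge \frac{n}{12}\bigl(1 - \tfrac{4}{n^{1/3}}\bigr)$.

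I expect the main obstacle to be exactly this counting step, for two linked reasons. First, the value gap of a non-similar pair is only of order $\frac1n$ — the very scale of the error term in Dress's Lemma \ref{dress} — so a naive appeal to equidistribution gives a vacuous (even negative) bound; one must show the gap beats $\frac{c}{n}$ with room to spare, which is where the hypothesis that \emph{all} intermediate denominators (not merely the endpoints) exceed $\beta$ must be exploited, and where a cutoff of size $\sim n^{1/3}$ and the resulting loss $\frac{4}{n^{1/3}}$ appear. Second, for near-extremal pairs the relevant triangle is long and thin, so its primitive-point count falls short of (density\,$\times$\,area); bounding this shortfall uniformly over all gaps $(\frac{p'}{q'},\frac{r'}{s'})$ and all admissible endpoint pairs is what costs a constant factor and produces $\frac1{12}$ rather than the conjecturally optimal $\frac14$.
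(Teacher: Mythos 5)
There is a genuine gap: the heart of your argument --- converting ``$t+1$ fractions of denominator $>\beta$ spanning a gap of length $\frac{D}{b_k b_l}$'' into the bound $t \ge \frac{n}{12}(1 - \frac{4}{n^{1/3}})$ --- is never carried out, and the tools you name cannot carry it out in the form you invoke them. The interval $[\frac{a_k}{b_k}, \frac{a_l}{b_l}]$ has length of order $\frac{1}{n}$, which is exactly the scale of the discrepancy term in Lemma \ref{dress}, so $A_n(\alpha_2) - A_n(\alpha_1) \ge N(\alpha_2 - \alpha_1 - \frac{2}{n})$ is vacuous (indeed negative) here; you acknowledge this yourself, but then only assert that exploiting the lower bound on intermediate denominators ``should yield'' the claimed constant. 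Likewise, the primitive-lattice-point count in your triangle can genuinely fall far below density times area for thin regions (consecutive Farey fractions being the extreme case), and no uniform lower bound is derived. The phrase ``optimising the denominator threshold'' gestures at the right idea but no threshold is ever chosen and no inequality is ever produced, so $\frac{1}{12}$ and the error term $\frac{4}{n^{1/3}}$ are not obtained --- they are postulated.

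The paper's proof fills exactly this hole with a local, telescoping argument rather than a counting one: it writes $\frac{x}{n} = \frac{a_l}{b_l} - \frac{a_k}{b_k} = \sum_{i=k}^{l-1} \frac{1}{b_i b_{i+1}}$ and splits the indices according to whether $\min(b_i, b_{i+1}) \le \frac{n}{6}$. For the ``large'' indices each term is less than $\frac{36}{5n^2}$, so their number (hence $l-k$) is at least $\frac{5n^2}{36}$ times their total contribution; for the ``small'' indices one bounds $\sum_j \frac{1}{b_{i_j}} < \frac{x}{6} + \frac{1}{n^{1/3}}$ by a second telescoping sum over the small-denominator fractions themselves, which shows the small indices eat up at most $\frac{2x}{5n}$ of the gap $\frac{x}{n}$. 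Lemma \ref{dress} and Lemma \ref{summtot} enter only in the auxiliary Lemma \ref{notbothsmall}, applied to an interval of length at least $n^{-2/3} \gg \frac{1}{n}$, where the discrepancy bound actually has room to work. Your determinant inequality $D \ge a_k + b_k$ is correct but is only used to get the gap above $\frac{1}{n}$ (the paper's $x > 1$); without the decomposition of the gap into $\sum \frac{1}{b_i b_{i+1}}$ and the two-scale treatment of denominators, the proposal does not reach the stated bound.
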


\begin{proof}
Taking the contrapositive, let us assume that $\frac{a_k}{b_k}$ and $\frac{a_l}{b_l}$ are not similarly ordered. We then see $\frac{a_l}{b_l} \ge \frac{a_k + 1}{b_k - 1} > \frac{a_k + 1}{b_k} \ge \frac{a_k}{b_k} + \frac{1}{n}$, so write $\frac{a_l}{b_l} - \frac{a_k}{b_k} = \frac{x}{n}$ for some $x > 1$. We now aim to show $l - k > \frac{n}{12} \left(1 - \frac{4}{n^{1/3}} \right)$, and by Lemma \ref{dontworryaboutsmalldenominators} we may assume $b_i > 6$ for all $i$ with $k \le i \le l$. We may further assume $n \ge 4^3 = 64$, as otherwise our upper bound is negative and the statement is trivially true. \\

Let $S_1$ be the set of indices $i$ with $k \le i \le l-1$ and $\min(b_1, b_{i+1}) \le \frac{n}{6}$, and let $S_2$ be those $i$ with $\min(b_1, b_{i+1}) > \frac{n}{6}$. Furthermore, let $i_1, i_2, \ldots, i_t$ be the actual indices for which $b_{i_j} \le \frac{n}{6}$. With these definitions in mind, we can show that we may assume that at least one of $b_{i_1}, b_{i_t}$ is larger than $n^{1/3}$.

\begin{notbothsmall} \label{notbothsmall}
If $n \ge 64$, $t \ge 2$, and $\max(b_{i_1}, b_{i_t}) \le n^{1/3}$, then $l - k > \frac{n}{2}$.
\end{notbothsmall}

\begin{proof}
If $\max(b_{i_1}, b_{i_t}) \le n^{1/3}$, then $\frac{a_{l}}{b_{l}} - \frac{a_{k}}{b_{k}} \ge \frac{a_{i_t}}{b_{i_t}} - \frac{a_{i_1}}{b_{i_1}} \ge \frac{1}{b_{i_1} b_{i_t}} \ge \frac{1}{n^{2/3}}$. Applying Lemma \ref{dress} with $\alpha = \frac{a_{k}}{b_{k}}$ and $\alpha =  \frac{a_{k}}{b_{k}} + \frac{1}{n^{2/3}}$, and we obtain that there are at least $N\left(\frac{1}{n^{2/3}} - \frac{2}{n} \right) = \frac{N(n^{1/3} - 2)}{n}$ Farey fractions in between $\frac{a_{k}}{b_{k}}$ and $\frac{a_{l}}{b_{l}}$. Since $\frac{N(n^{1/3} - 2)}{n} > \frac{n(n^{1/3} - 2)}{4}$ by Lemma \ref{summtot} and the latter is at least $\frac{n}{2}$ for $n \ge 64$, the proof is finished.
\end{proof}

With the help of Lemma \ref{notbothsmall} we can bound the sum of the reciprocals of the $b_{i_j}$.

\begin{sumofsmallreciprocals} \label{sumofsmallrecs}
We have the upper bound $$\sum_{j=1}^{t} \frac{1}{b_{i_j}} < \frac{x}{6} + \frac{1}{n^{1/3}}.$$
\end{sumofsmallreciprocals}

\begin{proof}
If $t = 1$, then we are done by the assumption $b_{i_1} > 6$. If $t > 1$, then
\begin{align*}
\frac{x}{n} + \frac{6}{n^{4/3}} &\ge \frac{6}{n^{4/3}} + \frac{a_{i_t}}{b_{i_t}} - \frac{a_{i_1}}{b_{i_1}} \\
&= \frac{6}{n^{4/3}} + \sum_{j=1}^{t-1} \left(\frac{a_{i_{j+1}}}{b_{i_{j+1}}} - \frac{a_{i_{j}}}{b_{i_{j}}} \right) \\
&\ge \frac{6}{n^{4/3}} + \sum_{j=1}^{t-1} \frac{1}{b_{i_j} b_{i_{j+1}}} \\
&\ge \frac{6}{n} \left(\max\left(\sum_{j=1}^{t-1} \frac{1}{b_{i_j}}, \sum_{j=2}^{t} \frac{1}{b_{i_j}}\right) + \frac{1}{n^{1/3}} \right) \\
&> \frac{6}{n} \sum_{j=1}^t \frac{1}{b_{i_j}},
\end{align*}

where the final inequality uses Lemma \ref{notbothsmall}. Multiplying both sides by $\frac{n}{6}$ gives the desired result.
\end{proof}

In the spirit of Erd\H os \cite{erd}, we will now write $\frac{x}{n}$ as the sum of two sums.

\begin{align*}
\frac{x}{n} &= \frac{a_l}{b_l} - \frac{a_k}{b_k} \\
&= \sum_{i=k}^{l-1} \left(\frac{a_{i+1}}{b_{i+1}} - \frac{a_i}{b_i} \right) \\
&= \sum_{i=k}^{l-1} \frac{1}{b_i b_{i+1}} \\
&= \sum_{i \in S_1} \frac{1}{b_i b_{i+1}} + \sum_{i \in S_2} \frac{1}{b_i b_{i+1}} 
\end{align*}

Applying $b_i + b_{i+1} > n$ for all $i$, we see that for the second sum (where $\min(b_i, b_{i+1}) > \frac{n}{6}$) we have $b_i b_{i+1} > \frac{n}{6} \frac{5n}{6} = \frac{5n^2}{36}$. This gives
\begin{equation*}
\sum_{i \in S_2} \frac{1}{b_i b_{i+1}} < \frac{36(l-k)}{5n^2},
\end{equation*}

or
\begin{equation*}
l - k > \frac{5n^2}{36} \sum_{i \in S_2} \frac{1}{b_i b_{i+1}}.
\end{equation*}

As for the first sum we have $b_i b_{i+1} > \min(b_i, b_{i+1}) \frac{5n}{6}$, while every element in $S_1$ occurs at most twice as an $i$ with $\min(b_i, b_{i+1}) \le \frac{n}{6}$. By furthermore applying Lemma \ref{sumofsmallrecs} we then get
\begin{align*}
\sum_{i \in S_1} \frac{1}{b_i b_{i+1}} &< \frac{6}{5n} \sum_{i \in S_1} \frac{1}{\min(b_i, b_{i+1})} \\
&\le \frac{12}{5n} \sum_{j=1}^t \frac{1}{b_{i_j}} \\
&< \frac{12}{5n} \left(\frac{x}{6} + \frac{1}{n^{1/3}} \right) \\
&= \frac{2x}{5n} - \frac{12}{5n^{4/3}}.
\end{align*}

We can now finish our proof as follows:
\begin{align*}
l-k &> \frac{5n^2}{36} \sum_{i \in S_2} \frac{1}{b_i b_{i+1}} \\
&= \frac{5n^2}{36} \left(\frac{x}{n} - \sum_{i \in S_1} \frac{1}{b_i b_{i+1}} \right) \\
&> \frac{5n^2}{36} \left(\frac{x}{n} - \frac{2x}{5n} - \frac{12}{5n^{4/3}} \right) \\
&= \frac{nx}{12} - \frac{n^{2/3}}{3} \\
&> \frac{n}{12} \left(1 - \frac{4}{n^{1/3}} \right). \qedhere
\end{align*}
\end{proof}

\section{A few final remarks}
The proof of Theorem \ref{lower} more generally shows the following result on the local density of Farey fractions.

\begin{generalthm}
Let $\frac{a_k}{b_k}$ and $\frac{a_l}{b_l}$ be two Farey fractions of order $n$ with $\frac{a_l}{b_l} - \frac{a_k}{b_k} = \frac{x}{n}$ for some $x > 0$. Then either there exists a Farey fraction $\frac{a}{b}$ with $b < \frac{6}{x}$ and $\frac{a_k}{b_k} \le \frac{a}{b} \le \frac{a_l}{b_l}$, or $l - k > nx \left(\frac{1}{12} - o(1) \right)$.
\end{generalthm}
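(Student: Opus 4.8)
The plan is to rerun the proof of Theorem \ref{lower} essentially verbatim, the only structural change being that the hypothesis ``$\frac{a_k}{b_k}$ and $\frac{a_l}{b_l}$ are not similarly ordered'' (which there produced, via Lemma \ref{dontworryaboutsmalldenominators}, the lower bound $b_i > 6$ on every intermediate denominator) is replaced by the assumption that the first alternative fails, namely that there is \emph{no} Farey fraction $\frac{a}{b}$ with $b < \frac{6}{x}$ in $[\frac{a_k}{b_k}, \frac{a_l}{b_l}]$. This assumption immediately yields $b_i \ge \frac{6}{x}$ for every $i$ with $k \le i \le l$, which is exactly the input the argument needs. As before I would assume $n \ge 64$, since otherwise the claimed bound $nx(\frac{1}{12} - o(1))$ is non-positive and there is nothing to prove. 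I would then set up the same machinery: split the indices $k \le i \le l-1$ into $S_1$ (those with $\min(b_i, b_{i+1}) \le \frac{n}{6}$) and $S_2$ (the rest), and let $i_1 < \cdots < i_t$ be the indices with $b_{i_j} \le \frac{n}{6}$.

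Two of the auxiliary lemmas must be revisited to accommodate a general $x$. Lemma \ref{sumofsmallrecs} goes through unchanged once $b_{i_1} > 6$ is replaced by $b_{i_1} \ge \frac{6}{x}$: in the case $t=1$ this gives $\frac{1}{b_{i_1}} \le \frac{x}{6} < \frac{x}{6} + \frac{1}{n^{1/3}}$, and for $t > 1$ the telescoping estimate together with $\max(b_{i_1}, b_{i_t}) > n^{1/3}$ is identical. The real point is Lemma \ref{notbothsmall}, whose conclusion $l - k > \frac{n}{2}$ is too weak once $x > 6$. Here I would observe that in the excluded case $\max(b_{i_1}, b_{i_t}) \le n^{1/3}$ (with $t \ge 2$) we automatically have $\frac{x}{n} = \frac{a_l}{b_l} - \frac{a_k}{b_k} \ge \frac{1}{b_{i_1} b_{i_t}} \ge \frac{1}{n^{2/3}}$, so that $x \ge n^{1/3}$. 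Applying Lemma \ref{dress} to the full interval (exactly as in the proof of Lemma \ref{notbothsmall}, but with the true length $\frac{x}{n}$ in place of $\frac{1}{n^{2/3}}$) then produces at least $N\left(\frac{x}{n} - \frac{2}{n}\right) = \frac{N(x-2)}{n} > \frac{n(x-2)}{4}$ Farey fractions strictly between $\frac{a_k}{b_k}$ and $\frac{a_l}{b_l}$, using Lemma \ref{summtot}. Since $x \ge n^{1/3} \ge 4 > 3$, one checks $\frac{n(x-2)}{4} > \frac{nx}{12}$, so in this case the desired bound already holds and I may assume $\max(b_{i_1}, b_{i_t}) > n^{1/3}$ for the remainder.

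With these adjustments the main computation is copied from Theorem \ref{lower} with $x$ carried along rather than bounded below by $1$: the $S_2$-sum obeys $b_i b_{i+1} > \frac{n}{6} \cdot \frac{5n}{6} = \frac{5n^2}{36}$, the $S_1$-sum is controlled by Lemma \ref{sumofsmallrecs}, and combining them gives $l - k > \frac{5n^2}{36} \sum_{i \in S_2} \frac{1}{b_i b_{i+1}} > \frac{nx}{12} - \frac{n^{2/3}}{3}$. Rewriting, $\frac{nx}{12} - \frac{n^{2/3}}{3} = nx\left(\frac{1}{12} - \frac{1}{3 x n^{1/3}}\right)$, which is of the advertised shape $nx\left(\frac{1}{12} - o(1)\right)$: for any fixed $x$ the error term tends to $0$ as $n \to \infty$, and for $x$ so small that it does not, the quantity $\frac{nx}{12} - \frac{n^{2/3}}{3}$ is anyway non-positive and the bound is vacuous. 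The step I expect to be the crux is precisely the reconciliation of the two regimes: the Erd\H os-style double-sum argument is what delivers the constant $\frac{1}{12}$ for bounded $x$, but it is the direct density estimate via Lemma \ref{dress} that rescues the large-$x$ range where $\frac{nx}{12}$ outgrows the $\frac{n}{2}$ of the original Lemma \ref{notbothsmall}; the observation that the bad case of that lemma forces $x \ge n^{1/3}$ is what lets the two arguments dovetail.
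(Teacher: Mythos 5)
The paper gives no separate proof of this theorem---it simply asserts that the proof of Theorem \ref{lower} generalizes---and your proposal is precisely that generalization, carried out correctly: the failure of the first alternative supplies the bound $b_i \ge \frac{6}{x}$ that Lemma \ref{dontworryaboutsmalldenominators} supplied in the original argument, and the double-sum computation then goes through with $x$ carried along, yielding $\frac{nx}{12} - \frac{n^{2/3}}{3}$. Your observation that the excluded case of Lemma \ref{notbothsmall} forces $x \ge n^{1/3}$, so that the direct density estimate via Lemmas \ref{dress} and \ref{summtot} covers the range where $\frac{nx}{12}$ outgrows the original conclusion $l-k > \frac{n}{2}$, is exactly the one point where the original proof needs patching, and you handle it correctly.
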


However, one can check that a direct application of Lemma \ref{dress} already improves upon this more general theorem for $x > 2.76$, so its value seems to stem mostly from small values of $x$. \\

And on that note, for $\frac{a_k}{b_k} \ge \frac{1}{2} - o(1)$ we have $x \ge \frac{3}{2} - o(1)$ if $\frac{a_k}{b_k}$ and $\frac{a_l}{b_l}$ are not similarly ordered. In this case we get the improved lower bound $l - k > n \left(\frac{1}{8} - o(1)\right)$ which in turn is at most a factor $2$ off from optimal, by the proof of Theorem \ref{upper}.

\end{document}